\documentclass[11pt]{article}

\usepackage{amsmath,amssymb,amsthm,mathtools}
\usepackage[hidelinks]{hyperref}
\usepackage[margin=2cm]{geometry}

\theoremstyle{plain}
\newtheorem{theorem}{Theorem}[section]
\newtheorem{corollary}[theorem]{Corollary}
\newtheorem{lemma}[theorem]{Lemma}
\newtheorem{proposition}[theorem]{Proposition}
\theoremstyle{definition}
\newtheorem{definition}[theorem]{Definition}
\theoremstyle{remark}
\newtheorem{remark}[theorem]{Remark}

\newcommand{\R}{\mathbb{R}}

\newcommand{\Z}{\mathbb{Z}}
\newcommand{\N}{\mathbb{N}}

\newcommand{\T}{\mathbb{T}}

\DeclareMathOperator{\diam}{diam}

\begin{document}

\begin{center}
{\Large Uniform optimal-order Wasserstein quantisation}\\[0.6em]
{\large Maja Gw\'o\'zd\'z}\\
{\normalsize University of Zurich \& ETH Z\"{u}rich}\\
{\normalsize \texttt{mgwozdz@ethz.ch}}\\[0.8em]
\end{center}

\begin{abstract}
We address Steinerberger's Wasserstein transport problem on the cube
\(Q=[0,1]^d\). For every \(d\ge2\), we consider a dyadic digital sequence \((x_n)\subset Q\) and prove that every prefix
\(\{x_1,\dots,x_N\}\) admits an exact equal-mass transport partition at the
optimal scale. More precisely, for every \(N\in\N\), there exist
pairwise disjoint Borel sets \(A_1,\dots,A_N\subset Q\) such that
\[
\lambda_d(A_n)=\frac1N,\qquad
A_n\subset B(x_n,6\sqrt d\,N^{-1/d})\qquad(1\le n\le N),
\]
and \(\lambda_d\!\bigl(Q\setminus\bigcup_{n=1}^N A_n\bigr)=0\). In other terms,
every prefix of the sequence supports an exact transport allocation of Lebesgue
mass to its points with uniformly controlled radius \(O(N^{-1/d})\). By an
elementary partition criterion, this yields
\[
W_\infty\!\left(\frac1N\sum_{n=1}^N\delta_{x_n},\,\lambda_d\right)\le 6\sqrt d\,N^{-1/d}
\qquad(N\in\N).
\]
The bound holds for every \(1\le p\le\infty\). The exponent
\(1/d\) is optimal, so it gives the sharp uniform prefix rate on the cube.
The result settles Steinerberger's problem for all \(d\ge1\) and all
\(1\le p\le\infty\).
\end{abstract}

\medskip
\noindent\textbf{2020 Mathematics Subject Classification.} 49Q22, 11K38, 11K31.\\
\textbf{Keywords.} Wasserstein distance, optimal transport, transport partitions, quasi-Monte Carlo, digital sequences.

\section{Introduction}\label{sec:introduction}

\subsection{Steinerberger's problem}
In \cite[Problem 47]{SteinerbergerOpenProblems}, Steinerberger asks whether there exists a deterministic infinite sequence
\((x_n)\subset[0,1]^d\) whose empirical measures converge to Lebesgue measure at the optimal-order Wasserstein rate uniformly over all prefix lengths.
Let \(Q:=[0,1]^d\), and let us denote by \(\lambda_d\) the normalised Lebesgue measure on \(Q\). Suppose we are given a sequence \((x_n)\subset Q\). We define its empirical measures by
\[
\mu_N:=\frac1N\sum_{n=1}^N\delta_{x_n}.
\]
For Borel probability measures \(\mu,\nu\) on \(Q\), let \(\Pi(\mu,\nu)\)
denote the set of couplings of \(\mu\) and \(\nu\). For \(1\le p<\infty\),
define
\[
W_p(\mu,\nu):=\Bigl(\inf_{\pi\in\Pi(\mu,\nu)}\int_{Q\times Q}|x-y|^p\,\mathrm d\pi(x,y)\Bigr)^{1/p},
\]
and define
\[
W_\infty(\mu,\nu):=\inf_{\pi\in\Pi(\mu,\nu)}\operatorname*{ess\,sup}_{(x,y)\sim\pi}|x-y|.
\]
We also write \(W_p:=W_p^{(Q,|\cdot|)}\).
The question is whether, for fixed \(d\) and \(p\), it is possible to find a single infinite sequence \((x_n)\subset Q\) such that
\begin{equation}\label{eq:OP47}
\exists\,C_{p,d}>0\ \forall N\in\N:\qquad
W_p(\mu_N,\lambda_d)\le C_{p,d}\,N^{-1/d}.
\end{equation}
The important point here is the uniformity in \(N\). Once we have fixed the
sequence, every prefix satisfies the optimal-order bound. Steinerberger's
problem list cites the one-dimensional obstruction and asks whether the
optimal uniform prefix rate on the cube can fail in low dimensions for large
\(p\) \cite[Problem 47]{SteinerbergerOpenProblems}. We answer Problem 47 on
\([0,1]^d\) completely: such a sequence exists if and only if \(d\ge2\). In
fact, for every \(d\ge2\), one explicit dyadic digital sequence satisfies
\[
W_\infty(\mu_N,\lambda_d)\lesssim_d N^{-1/d}
\]
uniformly in \(N\). Notice that \(W_p\le W_\infty\) holds for every
\(1\le p<\infty\), so it immediately yields the optimal-order bound for the full
range \(1\le p\le\infty\). Finally, if we combine it with the one-dimensional obstruction, we settle Problem 47 on \(Q\) for every \(d\ge1\).

We return to this sequence in Section \ref{subsec:digital_sequence_winfty},
where we describe the dyadic counting property used in the proof.
Let us fix $d\ge2$ and set $b:=2^d$. For $a\in\{0,1,\dots,b-1\}$, we write its binary
expansion on $d$ bits as
\[
a=\sum_{j=1}^d \varepsilon_j(a)\,2^{j-1},\qquad \varepsilon_j(a)\in\{0,1\},
\]
and for an integer $m\ge0$, we write its base-$b$ expansion as
\[
m=\sum_{k=0}^\infty a_k(m)\,b^k,\qquad a_k(m)\in\{0,1,\dots,b-1\},
\]
where all but finitely many digits $a_k(m)$ are $0$.

\begin{definition}[Dyadic digital sequence]\label{def:digital_sequence_winfty}
We define $(x_n)_{n\ge1}\subset Q$ by
\begin{equation}\label{eq:digital_sequence_winfty}
x_{n}:=\Bigl(\sum_{k=0}^\infty \varepsilon_1(a_k(n-1))\,2^{-(k+1)},\ \dots,\
          \sum_{k=0}^\infty \varepsilon_d(a_k(n-1))\,2^{-(k+1)}\Bigr),\qquad n\ge1.
\end{equation}
\end{definition}
\noindent This implies that if $n-1$ has base-$2^d$ digits $a_0,a_1,\dots$, then the $j$th coordinate of $x_n$ is the dyadic number whose digits are
$\varepsilon_j(a_0),\varepsilon_j(a_1),\dots$.

\subsection{Main results}

\begin{theorem}[Prefix transport partition at optimal scale]\label{thm:main}
Let \(d\ge2\), and let \((x_n)_{n\ge1}\subset Q\) be the dyadic digital sequence
from \eqref{eq:digital_sequence_winfty}. For every \(N\in\N\), there exist
pairwise disjoint Borel sets \(A_1,\dots,A_N\subset Q\) such that
\[
\lambda_d(A_n)=\frac1N,\qquad
A_n\subset B(x_n,6\sqrt d\,N^{-1/d})\qquad(1\le n\le N),
\]
and
\[
\lambda_d\!\Bigl(Q\setminus\bigcup_{n=1}^N A_n\Bigr)=0.
\]
\end{theorem}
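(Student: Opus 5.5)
The approach is a Hall-type (max-flow) argument for an exact allocation. It is driven by a multiscale counting property of the digital sequence, and the geometric series in that count is where \(d\ge2\) enters. Fix \(N\) and choose \(k\ge0\) with \(b^k\le N<b^{k+1}\). Write \(N=qb^k+r\) with \(1\le q\le b-1\) and \(0\le r<b^k\), so that \(2^{-k}\le 2N^{-1/d}\).

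\textbf{Step 1: dyadic counting.} From \eqref{eq:digital_sequence_winfty}, the base-\(b\) digits \(a_0,\dots,a_{j-1}\) of \(n-1\) determine the dyadic cube of side \(2^{-j}\) containing \(x_n\). The higher digits only add a shift of size less than \(2^{-j}\) in each coordinate. Hence every aligned block \(\{mb^j+1,\dots,(m+1)b^j\}\) of indices puts exactly one point into each dyadic cube of side \(2^{-j}\).

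Apply this at level \(k\). The first \(qb^k\) points put exactly \(q\) points into every level-\(k\) cell. The remaining \(r\) points have distinct low digit strings \(0,\dots,r-1\), so each level-\(k\) cell receives \(q\) or \(q+1\) points.

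At coarser levels \(j<k\), split \(r\) into aligned sub-blocks according to its base-\(b\) digits. This shows that each level-\(j\) cube \(D\) contains \(r\,b^{-j}+O(1)\) remainder points, with an absolute \(O(1)\) bound (at most \(1\) up to rounding). In particular the remainder discrepancy of any single dyadic cube is \(O(1)\), at every scale.

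\textbf{Step 2: Hall condition.} Let \(R:=c\sqrt d\,2^{-k}\) with a small absolute \(c\), chosen so that the final radius is at most \(6\sqrt d\,N^{-1/d}\). The claim to prove is that for every set \(S\) of indices,
\[
\tfrac{|S|}{N}\le\lambda_d\bigl(\textstyle\bigcup_{n\in S}B(x_n,R)\cap Q\bigr).
\]
It suffices to check this for \(S\) equal to all points lying in a union \(U\) of level-\(k\) cells, with \(B(x_n,R)\) replaced by the one-cell neighbourhood \(U^+\). Then
\[
\#\{n\le N:x_n\in U\}=q|U|+\#(\text{remainder points in }U),
\]
while
\[
N\lambda_d(U)=q|U|+r|U|b^{-k}\quad\text{and}\quad N\lambda_d(U^+\setminus U)\ge |\partial U|,
\]
using \(N\ge b^k\). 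Here \(|\partial U|\) denotes the number of level-\(k\) cells adjacent to \(U\) but not in it.

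So the task is to bound the remainder discrepancy of \(U\) by a constant multiple of \(|\partial U|\). To do this, take a Whitney-type decomposition of \(U\) into maximal dyadic cubes of levels \(j\le k\). Each such cube contributes discrepancy \(O(1)\) by Step 1. The number of maximal cubes at level \(j\) is \(\lesssim_d |\partial U|\,2^{-(k-j)(d-1)}\), since each of them touches the boundary of \(U\) at its own scale. For \(d\ge2\) the sum over \(j\) is geometric, giving total discrepancy \(\lesssim_d|\partial U|\).

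If the dimensional constant does not fit into the radius budget directly, I would enlarge \(U^+\) to a \(t\)-cell neighbourhood with \(t\) absolute. This is the main obstacle. The Whitney counting must be sharp enough, and the \(O(1)\) per-cube discrepancy must be uniform in \(r\), that everything closes with the explicit constant \(6\sqrt d\). Controlling this bookkeeping is where I expect the real work.

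\textbf{Step 3: exact partition.} Set up a finite bipartite network. On one side are the points \(x_1,\dots,x_N\), each with demand \(1/N\). On the other are level-\(k\) cells \(C\), each with supply \(\lambda_d(C)=b^{-k}\). A point is joined to a cell when the cell lies within distance \(R\) of it. Total supply and total demand are both \(1\). By the supply–demand (Gale/Hall) theorem, Step 2 yields a flow \(f(n,C)\ge0\) that exactly meets all demands and supplies.

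Inside each cell \(C\), split \(C\) into Borel pieces of measures \(f(n,C)\), for instance by slicing along the first coordinate. Let \(A_n\) be the union of its pieces. Then the \(A_n\) are pairwise disjoint, \(\lambda_d(A_n)=1/N\), and they cover \(Q\) up to a null set. Each \(A_n\) lies within distance \(R+\sqrt d\,2^{-k}\) of \(x_n\), and with \(2^{-k}\le 2N^{-1/d}\) this radius fits the bound \(6\sqrt d\,N^{-1/d}\).
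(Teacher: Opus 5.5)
There is a genuine gap, at exactly the step you flag in Step~2. You never prove the bound $\bigl|\rho(U)-r|U|b^{-k}\bigr|\lesssim|\partial U|$ for \emph{arbitrary} unions $U$ of level-$k$ cells, where $\rho(U)$ is the number of remainder points in $U$. The Whitney argument you sketch cannot prove it.

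The count ``number of maximal level-$j$ cubes $\lesssim_d|\partial U|\,2^{-(k-j)(d-1)}$'' is false. Take $U=Q\setminus C_0$ with $C_0$ a single level-$k$ cell. Then $|\partial U|=1$, yet for every $j=1,\dots,k$ the $b-1$ siblings of the level-$j$ ancestor of $C_0$ are maximal, giving $(b-1)k$ Whitney cubes. ``Touching the boundary at its own scale'' only gives at most $b-1$ maximal level-$j$ cubes per boundary cell, i.e.\ $\lesssim_d|\partial U|$ per level. The triangle inequality with the per-cube bound $O(1)$ therefore yields only $\lesssim_d k|\partial U|\approx|\partial U|\log N$. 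Geometric decay in $j$ would require a relative isoperimetric inequality inside every straddling dyadic cube, and that fails for small holes and thin spikes. In the example the true discrepancy is $<1$, but only because of cancellation that the triangle inequality throws away. A correct argument must capture cancellation across scales, e.g.\ a Haar/martingale decomposition of $1_E-rb^{-k}$ ($E$ the remainder cells) combined with an $L^1$ Poincar\'e inequality on dyadic cubes. With the obvious estimates, this gives a constant depending on $d$.

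That matters because the budget is tight. With adjacency defined at the level of cells through a $t$-cell neighbourhood, the radius is $(t+1)\sqrt d\,2^{-k}<2(t+1)\sqrt d\,N^{-1/d}$, so you need $t\le2$, not merely $t$ absolute. Also, a ``small'' $c$ in $R=c\sqrt d\,2^{-k}$ is incompatible with your reduction to $U^+$, which needs each point joined to every cell in the one-cell neighbourhood of its own cell. Steps~1 and~3 are fine, but Step~2 carries the whole difficulty and is missing.

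The paper avoids discrepancy of arbitrary unions altogether and only ever uses counts in dyadic cubes. It builds nested rectangles $R_u$ with $\lambda_d(R_u)=M(u)/N$ exactly, each split by $d$ successive coordinate cuts. Because the child counts lie in $\{m,m+1\}$, Lemma~\ref{lem:half_sum_m_m1} bounds the imbalance of every cut by at most $2^{d-2}$ points. So each cut lies within $\Delta_\ell=2^{2d-3}2^{\ell(d-1)}/N$ of the dyadic hyperplane (Lemma~\ref{lem:one_step_split_winfty}). For $d\ge2$ the accumulated drifts $S_\ell=\sum_{r<\ell}\Delta_r$ form a geometric series dominated by the last term, and stopping at level $L-2$ gives $S_{L-2}\le 2\cdot 2^{-L}$. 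Slicing each terminal $R_u$ into $M(u)$ equal slabs then gives the radius $\sqrt d\,(S_{L-2}+2^{-(L-2)})<6\sqrt d\,N^{-1/d}$. This top-down construction is what removes the $\log N$ loss that your bottom-up Hall verification runs into.
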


\begin{corollary}[Uniform optimal-order $W_\infty$ bound on the cube]\label{cor:winfty_main}
Under the hypotheses of Theorem \ref{thm:main}, for every \(N\in\N\),
\[
W_\infty\!\left(\frac1N\sum_{n=1}^N\delta_{x_n},\,\lambda_d\right)\le 6\sqrt d\,N^{-1/d}.
\]
\end{corollary}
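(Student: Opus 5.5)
The corollary follows from Theorem~\ref{thm:main} through the standard partition criterion for $W_\infty$: a transport partition built from the sets $A_n$ defines a coupling whose support lies within distance $6\sqrt d\,N^{-1/d}$ of the diagonal. Fix $N$ and take the pairwise disjoint Borel sets $A_1,\dots,A_N$ given by Theorem~\ref{thm:main}. Define
\[
\pi:=\sum_{n=1}^N \delta_{x_n}\otimes \bigl(\lambda_d|_{A_n}\bigr),
\]
that is, $\pi(E\times F)=\sum_{n}\mathbf 1_E(x_n)\,\lambda_d(F\cap A_n)$ for Borel sets $E,F\subset Q$.

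Next we check that $\pi$ is a coupling of $\mu_N$ and $\lambda_d$.
\begin{itemize}
\item \emph{First marginal.} Taking $F=Q$ gives $\pi(E\times Q)=\sum_n \mathbf 1_E(x_n)/N=\mu_N(E)$, because $\lambda_d(A_n)=1/N$. If several indices share the same point $x_n$, the masses simply add; this is harmless, since $\mu_N$ counts the indices with multiplicity in exactly the same way.
\item \emph{Second marginal.} Taking $E=Q$ gives $\pi(Q\times F)=\sum_n\lambda_d(F\cap A_n)=\lambda_d(F)$. This uses the disjointness of the $A_n$ and the fact that $Q\setminus\bigcup_n A_n$ is $\lambda_d$-null.
\end{itemize}

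The measure $\pi$ is concentrated on $\bigcup_n \{x_n\}\times A_n$. Since $A_n\subset B(x_n,6\sqrt d\,N^{-1/d})$, every pair $(x,y)$ in this set satisfies $|x-y|\le 6\sqrt d\,N^{-1/d}$. Hence
\[
\operatorname*{ess\,sup}_{(x,y)\sim\pi}|x-y|\le 6\sqrt d\,N^{-1/d}.
\]
Taking the infimum over couplings in the definition of $W_\infty$ gives the bound.

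There is no real obstacle here; all the substance is in Theorem~\ref{thm:main}. The only points needing care are the measurability of $\pi$ as a finite sum of product measures, and the fact that the null exceptional set does not affect the second marginal.
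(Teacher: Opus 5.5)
Your proposal is correct and follows the paper's route: the paper derives the corollary by applying its partition criterion (Lemma \ref{lem:partition_implies_winfty}) to the sets from Theorem \ref{thm:main}. That lemma's proof is exactly your coupling $\pi=\sum_n\delta_{x_n}\otimes(\lambda_d|_{A_n})$, with the same marginal checks and support bound.
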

\begin{proof}
It suffices to apply Lemma \ref{lem:partition_implies_winfty} to the partition obtained from Theorem \ref{thm:main}.
\end{proof}

\begin{corollary}[Uniform optimal-order Wasserstein bounds for all \(p\)]\label{cor:all_p}
Under the hypotheses of Corollary \ref{cor:winfty_main}, for every \(N\in\N\) and every
\(1\le p\le\infty\),
\[
W_p\!\left(\frac1N\sum_{n=1}^N\delta_{x_n},\,\lambda_d\right)\le 6\sqrt d\,N^{-1/d}.
\]
In particular, Steinerberger's Problem 47 has an affirmative answer on \(Q\) for every \(d\ge2\).
\end{corollary}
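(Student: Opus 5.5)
The bound for \(p=\infty\) is exactly Corollary \ref{cor:winfty_main}, so everything reduces to comparing \(W_p\) with \(W_\infty\) for finite \(p\). The only inputs are Theorem \ref{thm:main} (through Corollary \ref{cor:winfty_main}) and the monotonicity of the \(L^p\) norm on a probability space.

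Fix \(N\) and \(1\le p<\infty\), and write \(\mu_N=\frac1N\sum_{n\le N}\delta_{x_n}\). I would not pass through an abstract infimum but use the explicit coupling from Theorem \ref{thm:main}. Take the sets \(A_1,\dots,A_N\) given there and define
\[
\pi:=\sum_{n=1}^N \delta_{x_n}\otimes \lambda_d|_{A_n}.
\]
Since \(\lambda_d(A_n)=1/N\), the first marginal of \(\pi\) is \(\mu_N\). Since the \(A_n\) are disjoint and their union has full measure, the second marginal is \(\lambda_d\). Hence \(\pi\in\Pi(\mu_N,\lambda_d)\).

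The support of \(\pi\) lies in \(\bigcup_n\{x_n\}\times A_n\). On this set \(|x-y|\le r:=6\sqrt d\,N^{-1/d}\), because \(A_n\subset B(x_n,r)\). Therefore
\[
\int|x-y|^p\,\mathrm d\pi\le r^p ,
\]
since \(\pi\) is a probability measure. This gives \(W_p(\mu_N,\lambda_d)\le r\) directly.

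More generally, for any coupling \(\pi\) we have \(\|\,|x-y|\,\|_{L^p(\pi)}\le \|\,|x-y|\,\|_{L^\infty(\pi)}\) on a probability space. Taking infima over couplings gives \(W_p\le W_\infty\), and Corollary \ref{cor:winfty_main} then yields the same conclusion.

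For the final sentence, the bound above is \(W_p(\mu_N,\lambda_d)\le C_{p,d}N^{-1/d}\) for all \(N\), with \(C_{p,d}=6\sqrt d\) independent of \(p\) and \(N\). This is exactly \eqref{eq:OP47}, so the sequence of Definition \ref{def:digital_sequence_winfty} answers Problem 47 affirmatively for every \(d\ge2\).

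There is no real obstacle here. The only point needing care is checking that \(\pi\) has the correct second marginal. This uses both disjointness of the \(A_n\) and the null complement of their union: \(\pi(Q\times E)=\sum_n\lambda_d(E\cap A_n)=\lambda_d(E)\). All the substantive difficulty sits in Theorem \ref{thm:main}, which is assumed.
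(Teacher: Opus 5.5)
Your proposal is correct and matches the paper's proof: the paper argues exactly as in your second paragraph, bounding the $L^p(\pi)$ norm of $|x-y|$ by its essential supremum for every coupling, taking infima to get $W_p\le W_\infty$, and then invoking Corollary \ref{cor:winfty_main}. Your primary route just inlines the coupling $\pi=\sum_n\delta_{x_n}\otimes\lambda_d|_{A_n}$ from Lemma \ref{lem:partition_implies_winfty}, which is the same argument applied to one specific coupling.
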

\begin{proof}
For every \(\pi\in\Pi(\mu_N,\lambda_d)\) and every \(1\le p<\infty\), we have
\[
\left(\int |x-y|^p\,\mathrm d\pi(x,y)\right)^{1/p}
\le \operatorname*{ess\,sup}_{(x,y)\sim\pi}|x-y|.
\]
We now take the infimum over \(\pi\) and obtain
\[
W_p(\mu_N,\lambda_d)\le W_\infty(\mu_N,\lambda_d).
\]
The case \(p=\infty\) is Corollary \ref{cor:winfty_main}. It now suffices to combine this
monotonicity with Corollary \ref{cor:winfty_main} to prove the claim.
\end{proof}

\begin{proposition}[Obstruction]\label{prop:1d_intro}
If $d=1$ and $1\le p\le\infty$, then no sequence $(x_n)\subset[0,1]$ can possibly satisfy
\[
W_p(\mu_N,\lambda_1)\le C_p\,N^{-1}\qquad\forall N\in\N
\]
for any finite constant $C_p$.
\end{proposition}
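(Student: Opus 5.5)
Since \(W_1\le W_p\) for every \(1\le p\le\infty\) (by the same monotonicity argument as in the proof of Corollary \ref{cor:all_p}), it suffices to treat \(p=1\). So suppose, for contradiction, that some sequence \((x_n)\subset[0,1]\) satisfies \(W_1(\mu_N,\lambda_1)\le C N^{-1}\) for all \(N\).

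The first step uses the one-dimensional formula \(W_1(\mu,\nu)=\int_0^1|F_\mu(t)-F_\nu(t)|\,\mathrm dt\). Define the counting discrepancy
\[
\Delta_n(t):=\#\{k\le n:\ x_k<t\}-nt .
\]
The hypothesis then reads
\[
\int_0^1|\Delta_n(t)|\,\mathrm dt=n\,W_1(\mu_n,\lambda_1)\le C\qquad\text{for all } n\ge1 .
\]
So the \(L^1\) local discrepancy of every prefix is bounded uniformly in \(n\).

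The second step is the classical transference from a sequence in dimension \(1\) to a point set in dimension \(2\) (Roth, Proinov). For fixed \(N\), consider
\[
P_N:=\{(x_n,(n-1)/N):\ 1\le n\le N\}\subset[0,1]^2 .
\]
Its local discrepancy at the box \([0,t)\times[0,s)\) is
\[
D_N(t,s)=\#\{n\le \lceil sN\rceil:\ x_n<t\}-Nts=\Delta_{\lceil sN\rceil}(t)+O(1),
\]
since replacing \(sN\) by \(\lceil sN\rceil\) changes \(Nts\) by at most \(1\). Integrating first in \(t\) and then in \(s\) gives
\[
\int_{[0,1]^2}|D_N|\le \sup_{n\le N}\int_0^1|\Delta_n(t)|\,\mathrm dt+O(1)\le C+O(1).
\]
Thus the \(L^1\) discrepancy of the \(N\)-point set \(P_N\) in the unit square would be bounded independently of \(N\).

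The final step invokes Halász's theorem: every \(N\)-point set in \([0,1]^2\) has \(L^1\) star discrepancy at least \(c\sqrt{\log N}\) for an absolute constant \(c>0\). Letting \(N\to\infty\) contradicts the uniform bound above, which proves the proposition.

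The main obstacle is this last lower bound. The transference in the second step is routine bookkeeping. Halász's \(L^1\) bound is genuinely deep: it is proved with Riesz products built from Haar or Rademacher functions, which is harder than Roth's \(L^2\) orthogonality argument because duality with an \(L^\infty\) test function is needed. I would cite it rather than reprove it. If a fully elementary route were preferred for \(p\ge2\), I would use Roth's \(L^2\) bound (equivalently Proinov's \(\sqrt{\log N}\) bound for sequences) together with \(W_2\le W_p\). For \(1\le p<2\), however, I see no way to avoid Halász or an equivalent \(L^1\) lower bound.
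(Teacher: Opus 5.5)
Your argument is correct, but it takes a different route from the paper. The paper never works with discrepancy directly. It pushes $\mu_N$ and $\lambda_1$ forward under the $1$-Lipschitz quotient map $f\colon[0,1]\to\T^1$, so that $W_p^{(\T^1,\rho)}(f_\#\mu_N,\lambda_{\T^1})\le W_p(\mu_N,\lambda_1)$. It then cites Graham's theorem that every sequence on $\T^1$ has $W_1\gtrsim\sqrt{\log N}/N$ for infinitely many $N$, and finishes with $W_1\le W_p$, as you do. You instead stay on $[0,1]$ and identify $nW_1(\mu_n,\lambda_1)$ exactly with $\int_0^1|\Delta_n|$. You then lift the sequence to the planar set $P_N$; your bookkeeping $n-1<sN\iff n\le\lceil sN\rceil$ and the error bound $t(\lceil sN\rceil-sN)\le 1$ are right. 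Halász's theorem then finishes the proof. The paper's route is a two-line black-box reduction. Its input is nominally stronger, since it is a lower bound in the smaller torus metric, but it hides where the $\sqrt{\log N}$ comes from. On $\T^1$, $W_1$ is the $L^1$ discrepancy minimised over an additive constant, so the torus statement asks for more than the interval one. Your route shows that Halász's $L^1$ bound is the only deep ingredient, and it uses the simpler interval formula with no constant to optimise. It also gives the averaged quantitative statement $\frac1N\sum_{n=1}^N nW_1(\mu_n,\lambda_1)\ge c\sqrt{\log N}-1$. Your aside for $p\ge2$ would also need a link between $W_2$ and the $L^2$ discrepancy. This does hold, with equality, against the uniform measure: $W_p(\mu,\lambda_1)=\|F_\mu-\mathrm{id}\|_{L^p}$. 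You did not state it, and it is unnecessary, since your $p=1$ argument already covers every $p$.
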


\noindent This could be directly deduced from Graham's lower bound on $\T^1$
\cite[Theorem 3]{Graham2019}. For completeness, we provide a short reduction in
Appendix \ref{app:one_dimensional_obstruction}.

\begin{remark}[Sharpness of the exponent]
The exponent $1/d$ is sharp. Indeed, for every $1\le p\le\infty$ and every probability measure
$\sigma$ supported on at most $N$ points in $Q$,
\[
W_p(\sigma,\lambda_d)\ge c_{p,d}\,N^{-1/d},
\]
for some constant $c_{p,d}>0$. A simple volumetric argument yields this lower bound.
For the analogous \(W_1\) estimate on the torus, see Brown--Steinerberger
\cite{BrownSteinerberger2020}. The cube estimate follows by the same
volumetric argument, and the extension to general \(p\) uses \(W_1\le W_p\).
\end{remark}

\subsection{Related work}
For each fixed $1\le p<\infty$, the asymptotic $N$-point quantisation problem
is classical. For compactly supported absolutely continuous measures, and more
generally under standard moment assumptions, the minimal $W_p$-error is of
order $N^{-1/d}$ as $N\to\infty$ (see, for instance, \cite{GrafLuschgy2000}; for
a broad survey of quantisation theory, consult \cite{GrayNeuhoff1998}). These
optimisers are chosen separately for each $N$, however, and do not provide a single infinite sequence
with uniform control over all prefixes. The deterministic prefix problem is also related to torus constructions. For instance, on the flat torus \(\T^d:=\R^d/\Z^d\), Brown--Steinerberger
\cite[Theorem 5]{BrownSteinerberger2020} proved a uniform optimal-order \(W_2\)
estimate for badly approximable Kronecker sequences in dimensions \(d\ge2\). For random empirical measures, finite-sample Wasserstein rates are known in general settings (see \cite{FournierGuillin2015,WeedBach2019}). Let $\widehat\lambda_N$
denote the empirical measure of $N$ i.i.d.\ $\lambda_d$-distributed points, then
the result by Boissard--Le Gouic \cite[Corollary 1.2]{BoissardLeGouic2014} implies that
\[
\mathbb E\,W_p(\widehat\lambda_N,\lambda_d)\lesssim N^{-1/d}
\]
on $[0,1]^d$ whenever $d>2p$.

\subsection{Proof strategy}
The proof of the main theorem is geometric. In Section \ref{sec:winfty_sharp}, we construct, for each $N$, an exact equal-mass transport partition of $Q$ into
$N$ Borel sets of measure $1/N$. Each is attached to a distinct point of
$\{x_1,\dots,x_N\}$ and contained in a ball of radius $O(N^{-1/d})$ centred at
that point. Lemma \ref{lem:partition_implies_winfty} then yields
Corollary \ref{cor:winfty_main}, and Corollary \ref{cor:all_p} follows directly from the
monotonicity $W_p\le W_\infty$. In Appendix \ref{app:one_dimensional_obstruction},
we prove the one-dimensional obstruction.

We use the standard notation $A\lesssim_{(\cdots)}B$ to mean
$A\le C\,B$ for a constant $C>0$ that depends only on the indicated parameters,
and $A\simeq_{(\cdots)}B$ when both $A\lesssim_{(\cdots)}B$ and
$B\lesssim_{(\cdots)}A$ hold.

\section{Transport partitions}\label{sec:winfty_sharp}

In this section, we work on the half-open cube \(Q=[0,1)^d\), which we identify with
\([0,1]^d\) up to a \(\lambda_d\)-null set. We build all partitions below
with half-open rectangles.
Since \(|Q|=1\), the normalised measure \(\lambda_d\) is the ordinary
\(d\)-dimensional Lebesgue measure on measurable subsets of \(Q\). For
measurable \(E\subset\mathbb R^d\), we write \(|E|\) for the ordinary Lebesgue
measure.

\subsection{A partition criterion for \texorpdfstring{$W_\infty$}{Winfty}}\label{subsec:winfty_partition_sufficient}

\begin{lemma}[A partition implies a $W_\infty$ bound]\label{lem:partition_implies_winfty}
Let $x_1,\dots,x_N\in Q$ and $\mu_N:=\frac1N\sum_{n=1}^N\delta_{x_n}$.
We also assume that for some $r\ge0$, there exist pairwise disjoint Borel sets $A_1,\dots,A_N\subset Q$ such that
\[
\lambda_d(A_n)=\frac1N,\qquad A_n\subset B(x_n,r)\quad(1\le n\le N),
\]
and $\lambda_d\!\bigl(Q\setminus\bigcup_{n=1}^N A_n\bigr)=0$.
It follows that $W_\infty^{(Q,|\cdot|)}(\mu_N,\lambda_d)\le r$.
\end{lemma}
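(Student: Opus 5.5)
We construct an explicit coupling out of the partition and check that it is admissible and concentrated on pairs at distance at most \(r\). Define
\[
\pi:=\sum_{n=1}^N \delta_{x_n}\otimes \bigl(\lambda_d|_{A_n}\bigr),
\qquad\text{i.e.}\qquad
\pi(E):=\sum_{n=1}^N \lambda_d\bigl(\{y\in A_n:(x_n,y)\in E\}\bigr)
\]
for Borel \(E\subset Q\times Q\). Each summand is a finite Borel measure on \(Q\times Q\), being the pushforward of \(\lambda_d|_{A_n}\) under the Borel map \(y\mapsto(x_n,y)\). Hence \(\pi\) is a Borel measure.

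The first step is to verify the marginals. For Borel \(B\subset Q\), the first marginal is
\[
\pi(B\times Q)=\sum_{n:\,x_n\in B}\lambda_d(A_n)=\frac1N\#\{n:x_n\in B\}=\mu_N(B),
\]
counting repeated points with multiplicity, which is consistent with the definition of \(\mu_N\). The second marginal is
\[
\pi(Q\times B)=\sum_n\lambda_d(A_n\cap B)=\lambda_d\Bigl(B\cap\bigcup_n A_n\Bigr)=\lambda_d(B).
\]
The middle equality uses that the \(A_n\) are pairwise disjoint. The last equality uses that \(Q\setminus\bigcup_n A_n\) is \(\lambda_d\)-null. Thus \(\pi\in\Pi(\mu_N,\lambda_d)\).

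The second step is to bound the essential supremum of \(|x-y|\) under \(\pi\). The measure \(\pi\) is concentrated on the Borel set \(S:=\bigcup_n\{x_n\}\times A_n\). Since \(A_n\subset B(x_n,r)\), every \((x,y)\in S\) satisfies \(|x-y|\le r\); this holds whether the ball is taken open or closed. Moreover \(\pi\bigl((Q\times Q)\setminus S\bigr)=\sum_n\lambda_d(A_n\setminus A_n)=0\). Therefore
\[
\operatorname*{ess\,sup}_{(x,y)\sim\pi}|x-y|\le r,
\]
and taking the infimum over couplings gives \(W_\infty^{(Q,|\cdot|)}(\mu_N,\lambda_d)\le r\).

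No step is a genuine obstacle. The only point requiring care is the second marginal, where disjointness and full measure of the union are both used, together with measurability of \(S\), which is a finite union of products of Borel sets.
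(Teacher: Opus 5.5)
Your proof is correct and follows the paper's argument: you use the same coupling $\pi=\sum_n\delta_{x_n}\otimes(\lambda_d|_{A_n})$, check both marginals using disjointness and the null complement, and bound the essential supremum by $r$. Your extra care about measurability, repeated points and open versus closed balls only fills in details the paper leaves implicit.
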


\begin{proof}
By symmetry of $W_\infty$, it suffices to construct a coupling in $\Pi(\mu_N,\lambda_d)$. To this end, we define $\pi\in\Pi(\mu_N,\lambda_d)$ by
\[
\pi:=\sum_{n=1}^N \delta_{x_n}\otimes (\lambda_d|_{A_n}).
\]
This implies that $\pi$ has total mass $\sum_n \lambda_d(A_n)=1$. Its first marginal is
$\sum_n \lambda_d(A_n)\,\delta_{x_n}=\mu_N$, and its second marginal is
$\sum_n \lambda_d|_{A_n}=\lambda_d$, which follows from the fact that the $A_n$ partition $Q$ up to null sets.
Moreover, $\pi$ is supported on $\{(x_n,y):y\in A_n\}\subset\{(x,y):|x-y|\le r\}$, so
\[
W_\infty(\mu_N,\lambda_d)
\le \operatorname*{ess\,sup}_{(x,y)\sim\pi}|x-y|
\le r.
\]
This proves the claim.
\end{proof}

\subsection{Dyadic Cubes}\label{subsec:digital_sequence_winfty}

Recall the dyadic digital sequence from Definition \ref{def:digital_sequence_winfty}.
We set $b:=2^d$. This dyadic digital construction is closely related to standard digital
sequences in quasi-Monte Carlo theory. In other terms, it could also be understood as a
base-\(2\) digital construction obtained by distributing the binary digits of
\(n-1\) among the \(d\) coordinates. For more details on digital sequences, see
\cite[Chapter 4]{Niederreiter1992} and \cite[Chapter 4]{DickPillichshammer2010}.

For $\ell\in\N$ and a word $u=(u_0,\dots,u_{\ell-1})\in\{0,1,\dots,b-1\}^\ell$, we
define the level-$\ell$ dyadic cube
\begin{equation}\label{eq:dyadic_cube_u_winfty}
C_u:=\prod_{j=1}^d \Bigl[\ \sum_{k=0}^{\ell-1}\varepsilon_j(u_k)\,2^{-(k+1)}\ ,\
                         \sum_{k=0}^{\ell-1}\varepsilon_j(u_k)\,2^{-(k+1)}+2^{-\ell}\ \Bigr)\subset Q,
\end{equation}
which is half-open (this is required to remove boundary ambiguities). These $C_u$ are precisely the common dyadic cubes of side $2^{-\ell}$. For $\ell=0$, we interpret $u=\varnothing$. In this case, we have $C_\varnothing=Q$.

\begin{lemma}[Prefix counts in dyadic cubes]\label{lem:dyadic_counts_winfty}
Let $(x_n)$ be as in Definition \ref{def:digital_sequence_winfty}.
We also fix $\ell\in\N$ and $u\in\{0,\dots,b-1\}^\ell$.
For every $N\in\N$, we have
\begin{equation}\label{eq:counts_floor_ceil_winfty}
\#\{1\le n\le N:\ x_n\in C_u\}\ \in\ \Bigl\{\Bigl\lfloor \frac{N}{b^\ell}\Bigr\rfloor,\ \Bigl\lceil \frac{N}{b^\ell}\Bigr\rceil\Bigr\}.
\end{equation}
\end{lemma}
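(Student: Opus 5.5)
The plan is to show that membership \(x_n\in C_u\) depends only on the lowest \(\ell\) base-\(b\) digits of \(n-1\). From that, counting reduces to counting integers in \(\{0,\dots,N-1\}\) in one fixed residue class modulo \(b^\ell\).

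\textbf{Step 1: the residue-class characterisation.} Write \(m=n-1\) with base-\(b\) digits \(a_k=a_k(m)\). By \eqref{eq:digital_sequence_winfty}, the \(j\)th coordinate of \(x_n\) is \(\sum_{k\ge0}\varepsilon_j(a_k)2^{-(k+1)}\). Since \(m\) has only finitely many nonzero digits, this is a finite dyadic expansion, i.e. one not ending in infinitely many ones. So its first \(\ell\) binary digits are exactly \(\varepsilon_j(a_0),\dots,\varepsilon_j(a_{\ell-1})\). The half-open interval
\[
\Bigl[\sum_{k<\ell}\varepsilon_j(u_k)2^{-(k+1)},\ \sum_{k<\ell}\varepsilon_j(u_k)2^{-(k+1)}+2^{-\ell}\Bigr)
\]
consists precisely of the reals in \([0,1)\) whose finite (terminating) binary expansion begins with the digits \(\varepsilon_j(u_0),\dots,\varepsilon_j(u_{\ell-1})\). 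Also, the tail \(\sum_{k\ge\ell}\varepsilon_j(a_k)2^{-(k+1)}\) is strictly less than \(2^{-\ell}\), because only finitely many terms are nonzero. Hence
\[
x_n\in C_u\iff \varepsilon_j(a_k)=\varepsilon_j(u_k)\ \text{for all }j\le d,\ k<\ell.
\]
Since \(a\mapsto(\varepsilon_1(a),\dots,\varepsilon_d(a))\) is a bijection \(\{0,\dots,b-1\}\to\{0,1\}^d\), this is equivalent to \(a_k(m)=u_k\) for all \(k<\ell\). In turn this is equivalent to \(m\equiv r_u \pmod{b^\ell}\), where \(r_u:=\sum_{k<\ell}u_kb^k\in\{0,\dots,b^\ell-1\}\).

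\textbf{Step 2: counting.} The count in \eqref{eq:counts_floor_ceil_winfty} therefore equals \(\#\{0\le m\le N-1: m\equiv r_u \bmod b^\ell\}\). Write \(N=qb^\ell+s\) with \(0\le s<b^\ell\). The first \(qb^\ell\) integers contribute exactly \(q\) elements of each residue class. The remaining block \(\{qb^\ell,\dots,qb^\ell+s-1\}\) contains each residue at most once, namely exactly those residues that are \(<s\). So the count is \(q\) or \(q+1\), with \(q+1\) occurring only when \(s>0\). Since \(q=\lfloor N/b^\ell\rfloor\), and \(\lceil N/b^\ell\rceil=q+1\) when \(s>0\) (and \(=q\) when \(s=0\)), the claim follows.

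The only delicate point is Step 1: the boundary convention. This is handled by the half-openness of \(C_u\) together with the fact that every \(x_n\) has a terminating dyadic expansion, so no point lies ambiguously on a boundary with the "wrong" digits. In particular, no coordinate equals \(1\), which ensures \(x_n\in[0,1)^d\). Everything else is elementary counting.
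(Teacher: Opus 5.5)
Your proof is correct and follows essentially the same route as the paper. You identify \(x_n\in C_u\) with \(n-1\equiv r_u \pmod{b^\ell}\) and then count that residue class in \(\{0,\dots,N-1\}\), making explicit two points the paper leaves as ``by construction'' and ``counting a residue class'': the terminating-expansion argument for the half-open boundaries (tail strictly below \(2^{-\ell}\)), and the decomposition \(N=qb^\ell+s\).
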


\begin{proof}
Let us write $m:=n-1$. By construction, $x_n\in C_u$ is equivalent to the requirement that the first $\ell$ base-$b$ digits of $m$ are the same as $u$, \textit{i.e.}, $a_k(m)=u_k$ for $k=0,\dots,\ell-1$.
In other terms,
\[
m\equiv r_u:=\sum_{k=0}^{\ell-1}u_k\,b^k\pmod{b^\ell}.
\]
It follows that $\{1\le n\le N:x_n\in C_u\}$ is in bijection with the integers $m\in\{0,1,\dots,N-1\}$ in the residue class $r_u\bmod b^\ell$.
If we now count a residue class in an initial interval, we obtain \eqref{eq:counts_floor_ceil_winfty}.
\end{proof}

\begin{remark}[Classical van der Corput counting]
Lemma \ref{lem:dyadic_counts_winfty} is the multidimensional analogue of the classical counting method for van der Corput-type sequences \cite{FaureKritzerPillichshammer2015}.
\end{remark}

\subsection{Near-dyadic mass splitting}\label{subsec:mass_splitting_winfty}

We now construct, for each $N$, a partition into $N$ equal-measure sets that lie within distance $\lesssim N^{-1/d}$ from the points $x_1,\dots,x_N$. Notice that in $d\ge2$, the necessary boundary shifts form a geometric series. There are two ideas behind each coordinate cut. On the one hand, we need some combinatorial control on how far the relevant child counts could deviate from exact halving, on the other, we need geometric control on the displacement of the cutting hyperplane. We now focus on the above issues in the following lemmas.

\begin{lemma}[Half-sums for $\{m,m+1\}$-valued multisets]\label{lem:half_sum_m_m1}
Let $B\in\N$ be even and let $c_1,\dots,c_B\in\Z$ satisfy $c_i\in\{m,m+1\}$ for some $m\in\Z$. Let $I\subset\{1,\dots,B\}$ with $|I|=B/2$. It follows that
\[
\Bigl|\sum_{i\in I}c_i - \frac12\sum_{i=1}^B c_i\Bigr|\ \le\ \frac{B}{4}.
\]
\end{lemma}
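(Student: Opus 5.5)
We reduce to a count of how many entries of each value fall in \(I\). Write \(c_i=m+\delta_i\) with \(\delta_i\in\{0,1\}\), and set \(K:=\#\{i:\delta_i=1\}\) and \(k:=\#\{i\in I:\delta_i=1\}\). Since \(|I|=B/2\), the constant part contributes \(m\cdot B/2\) to \(\sum_{i\in I}c_i\) and \(\frac12\cdot mB\) to \(\frac12\sum_i c_i\), so these cancel. The difference therefore equals \(k-K/2\), and the claim reduces to
\[
\Bigl|k-\frac K2\Bigr|\le\frac B4 .
\]

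The admissible values of \(k\) come from the obvious counting constraints. We have \(0\le k\le \min(K,B/2)\), and since \(I\) has \(B/2-k\) entries equal to \(m\) while only \(B-K\) such entries exist, also \(k\ge K-B/2\). Thus
\[
\max(0,K-B/2)\le k\le\min(K,B/2).
\]

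We then bound \(k-K/2\) from above and below, with cases on whether \(K\le B/2\).

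\emph{Upper bound.} If \(K\le B/2\), then \(k-K/2\le K/2\le B/4\). If \(K>B/2\), then \(k-K/2\le B/2-K/2<B/4\).

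\emph{Lower bound.} If \(K\le B/2\), then \(k-K/2\ge -K/2\ge -B/4\). If \(K>B/2\), then \(k-K/2\ge K-B/2-K/2=K/2-B/2\ge -B/4\), using \(K\ge B/2\).

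Combining the two bounds gives the claim. No step here is a real obstacle; the only point needing care is the lower constraint \(k\ge K-B/2\). Without it the estimate fails when \(K\) is close to \(B\), since then \(-K/2\) could be almost \(-B/2\).
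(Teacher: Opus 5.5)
Your proof is correct and follows essentially the same route as the paper: you write each entry as $m$ plus a bit, reduce the difference to $k-K/2$, and use the counting constraints $\max(0,K-B/2)\le k\le\min(K,B/2)$. From there you split into the cases $K\le B/2$ and $K>B/2$; the paper also splits on this, but bounds $|s-r/2|$ directly via the midpoint of the admissible interval, whereas you bound $k-K/2$ from above and below separately.
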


\begin{proof}
We first write $\sum_{i=1}^B c_i = Bm + r$, where $r\in\{0,1,\dots,B\}$ is the number of indices with $c_i=m+1$. We also set $\sum_{i\in I}c_i = (B/2)m + s$, where $s$ is the number of indices in $I$ with $c_i=m+1$.
We obtain:
\[
\sum_{i\in I}c_i-\frac12\sum_{i=1}^B c_i = s-\frac r2.
\]
Notice that $I$ has size $B/2$, so we obtain $\max\{0,r-B/2\}\le s\le \min\{r,B/2\}$.
If $r\le B/2$, then $s\in[0,r]$ and $|s-r/2|\le r/2\le B/4$.
Finally, if $r\ge B/2$, then $s\in[r-B/2,B/2]$ (with midpoint equal to $r/2$), so $|s-r/2|\le (B-r)/2\le B/4$.
\end{proof}

\begin{lemma}[Cut displacement in a rectangle]\label{lem:cut_displacement_winfty}
Let $R=\prod_{j=1}^d [a_j,b_j)\subset\R^d$ be an axis-parallel rectangle and fix an index $j\in\{1,\dots,d\}$.
Let
\[
\mathcal A_j:=\prod_{i\ne j}(b_i-a_i)
\]
be the $(d-1)$--dimensional cross-sectional area orthogonal to $e_j$.
For an arbitrary $V\in(0,|R|)$, there exists a unique $t\in(a_j,b_j)$ such that
\[
|R\cap\{x_j<t\}|=V.
\]
Moreover, if $t_{\mathrm{mid}}:=(a_j+b_j)/2$ is the midpoint of the $j$-th interval, then
\[
|t-t_{\mathrm{mid}}|
=\frac{|V-|R|/2|}{\mathcal A_j}.
\]
\end{lemma}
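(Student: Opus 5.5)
The statement is elementary: it reduces to the fact that the volume of the slab $R\cap\{x_j<t\}$ is an affine, strictly increasing function of $t$. I will compute this function explicitly and then read off both existence/uniqueness and the displacement formula.

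First, for $t\in[a_j,b_j]$, the set $R\cap\{x_j<t\}$ is the rectangle obtained from $R$ by replacing its $j$-th side $[a_j,b_j)$ with $[a_j,t)$. By the product formula for Lebesgue measure of rectangles,
\[
f(t):=|R\cap\{x_j<t\}|=(t-a_j)\,\mathcal A_j .
\]
Here $\mathcal A_j>0$ is implicit, since $|R|=(b_j-a_j)\mathcal A_j>V>0$ forces every side length to be positive.

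Second, $f$ is continuous and strictly increasing on $[a_j,b_j]$, with $f(a_j)=0$ and $f(b_j)=|R|$. Given $V\in(0,|R|)$, the equation $f(t)=V$ therefore has exactly one solution, namely $t=a_j+V/\mathcal A_j$. This $t$ lies strictly inside $(a_j,b_j)$ because $0<V<|R|$. For uniqueness among all $t\in(a_j,b_j)$, no separate argument is needed beyond the strict monotonicity of $f$ on that interval.

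Third, the midpoint satisfies $f(t_{\mathrm{mid}})=\tfrac{b_j-a_j}{2}\mathcal A_j=|R|/2$. Since $f$ is affine with slope $\mathcal A_j$,
\[
V-|R|/2=f(t)-f(t_{\mathrm{mid}})=\mathcal A_j\,(t-t_{\mathrm{mid}}).
\]
Taking absolute values and dividing by $\mathcal A_j$ gives the claimed identity. There is no real obstacle here. The only point needing care is to justify $\mathcal A_j>0$, so that the division is legitimate; this follows from the hypothesis $V<|R|$, which forces $|R|>0$ and hence every side length to be positive.
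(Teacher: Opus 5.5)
Your proof is correct and follows the same route as the paper: both observe that $|R\cap\{x_j<t\}|=\mathcal A_j(t-a_j)$ is continuous and strictly increasing in $t$, which gives existence and uniqueness, and the displacement identity is the direct computation you carry out via $f(t_{\mathrm{mid}})=|R|/2$. Your explicit justification that $\mathcal A_j>0$, from $0<V<|R|$, is a small detail the paper leaves implicit.
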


\begin{proof}
Since $|R\cap\{x_j<t\}|=\mathcal A_j\,(t-a_j)$ for $t\in(a_j,b_j)$ holds, we know that
the map $t\mapsto \mathcal A_j(t-a_j)$ is continuous and strictly increasing on $(a_j,b_j)$, so existence and uniqueness hold. The identity for $|t-t_{\mathrm{mid}}|$ follows by direct computation.
\end{proof}

Let us now fix $N\in\N$ and define
\begin{equation}\label{eq:def_L_winfty}
L:=\min\{\ell\in\{0,1,2,\dots\}:\ b^\ell\ge N\}.
\end{equation}
For each $\ell\in\{0,1,\dots,L-2\}$ and each word $u\in\{0,\dots,b-1\}^\ell$, define the prefix count
\[
M(u):=\#\{1\le n\le N:\ x_n\in C_u\}.
\]

\begin{lemma}[Choice of $L$]\label{lem:L_and_Mu_bounds}
Let $L$ be given by \eqref{eq:def_L_winfty}. We then obtain
\begin{equation}\label{eq:two_power_bounds_winfty}
2^{-L}\ \le\ N^{-1/d}\ \le\ 2\cdot2^{-L}.
\end{equation}
For every $\ell\in\{0,1,\dots,L-2\}$ and every word $u\in\{0,\dots,b-1\}^\ell$, we have
\begin{equation}\label{eq:M_u_bounds_winfty}
\Bigl\lfloor \frac{N}{b^\ell}\Bigr\rfloor\ \le\ M(u)\ \le\ \Bigl\lceil \frac{N}{b^\ell}\Bigr\rceil.
\end{equation}
If $N>b^2$, then
\begin{equation}\label{eq:Mu_Lminus2_positive}
M(u)\ \ge\ b\qquad\text{for all }|u|=L-2.
\end{equation}
\end{lemma}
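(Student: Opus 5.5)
The plan is to verify the three assertions in turn. Each one follows from the minimality in \eqref{eq:def_L_winfty} together with Lemma \ref{lem:dyadic_counts_winfty}; no new construction is needed.

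\emph{Step 1: the bounds \eqref{eq:two_power_bounds_winfty}.} By definition $b^L=2^{dL}\ge N$, so $2^L\ge N^{1/d}$, which gives $2^{-L}\le N^{-1/d}$. For the upper bound I split into two cases.
\begin{itemize}
\item If $L=0$, then $N=1$, and $N^{-1/d}=1\le 2=2\cdot 2^{-L}$.
\item If $L\ge1$, minimality of $L$ gives $b^{L-1}<N$, i.e.\ $2^{L-1}<N^{1/d}$. Hence $N^{-1/d}<2^{-(L-1)}=2\cdot2^{-L}$.
\end{itemize}

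\emph{Step 2: the bounds \eqref{eq:M_u_bounds_winfty}.} For $\ell\ge1$ and $|u|=\ell$, this is exactly Lemma \ref{lem:dyadic_counts_winfty}, because $M(u)$ is the prefix count appearing there. For $\ell=0$ we have $u=\varnothing$ and $C_\varnothing=Q$, so $M(\varnothing)=N=\lfloor N/b^0\rfloor=\lceil N/b^0\rceil$. The lemma is stated only for $\ell\in\N$, so this case has to be recorded separately.

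\emph{Step 3: the bound \eqref{eq:Mu_Lminus2_positive}.} Assume $N>b^2$. Since $b^L\ge N>b^2$, we get $L\ge3$, so $L-2\ge1$ and Step 2 applies at level $L-2$. Minimality of $L$ gives $N>b^{L-1}$, hence
\[
\frac{N}{b^{L-2}}>b .
\]
Because $b$ is an integer, $\lfloor N/b^{L-2}\rfloor\ge b$. Combining this with the lower bound in \eqref{eq:M_u_bounds_winfty} yields $M(u)\ge b$ for all $|u|=L-2$.

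There is no real obstacle here. The only points needing care are the edge cases: $L=0$ in Step 1, $\ell=0$ in Step 2, and checking that $L-2\ge 1$ in Step 3 so that Lemma \ref{lem:dyadic_counts_winfty} is applicable.
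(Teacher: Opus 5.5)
Your proposal is correct and follows the paper's proof: minimality of $L$ gives $b^{L-1}<N\le b^L$, hence the two-sided bound on $N^{-1/d}$; the count bounds come from Lemma~\ref{lem:dyadic_counts_winfty}; and $N/b^{L-2}>b$ together with the integrality of $b$ gives $M(u)\ge b$. Your separate treatment of the edge cases $L=0$ and $\ell=0$ is a small extra piece of care that the paper passes over.
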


\begin{proof}
Note that $L$ is minimal with $b^L\ge N$, so we have
\[
b^{L-1}<N\le b^L.
\]
It now suffices to apply $b=2^d$ and take \(-1/d\)-powers to obtain
\[
2^{-L}\le N^{-1/d}<2^{-(L-1)}=2\cdot 2^{-L},
\]
which proves \eqref{eq:two_power_bounds_winfty}. The estimate \eqref{eq:M_u_bounds_winfty} follows immediately from
Lemma \ref{lem:dyadic_counts_winfty}.

Finally, if $N>b^2$, then $L\ge3$ and still $b^{L-1}<N$. It follows that for every word
$u$ with $|u|=L-2$,
\[
\frac{N}{b^{L-2}}>b,
\]
and therefore
\[
M(u)\ge \Bigl\lfloor \frac{N}{b^{L-2}}\Bigr\rfloor\ge b
\]
by \eqref{eq:M_u_bounds_winfty}. This proves \eqref{eq:Mu_Lminus2_positive}.
\end{proof}

\begin{remark}[Induction at level $L-2$]
We terminate the induction at level \(L-2\) for geometric reasons, \textit{i.e.}, although \(b^{L-1}<N\) still implies that every level-\((L-1)\) dyadic cube contains at least one point, the drift estimate that we need for Lemma \ref{lem:one_step_split_winfty} is only established for \(\ell\le L-3\). We will use the bound \eqref{eq:Mu_Lminus2_positive} later to guarantee that the terminal rectangles \(R_u\) have positive volume.
\end{remark}

\paragraph{Controlled drift and rectangle partitions.}
Let us assume $L\ge3$. For $0\le \ell\le L-3$, we define
\begin{equation}\label{eq:Delta_S_def}
\Delta_\ell:=\frac{2^{2d-3}}{N}\,2^{\ell(d-1)},\qquad S_0:=0,\qquad S_{\ell+1}:=S_\ell+\Delta_\ell.
\end{equation}

\begin{lemma}[Controlled drift and splitting]\label{lem:one_step_split_winfty}
We assume $L\ge3$ and let $0\le \ell\le L-3$.
We further fix $u\in\{0,\dots,b-1\}^\ell$ and write
\[
C_u=\prod_{j=1}^d [A_j,A_j+2^{-\ell}),\qquad
R_u=\prod_{j=1}^d [a_j,b_j).
\]
Assume that
\[
\lambda_d(R_u)=\frac{M(u)}{N}
\]
and that
\[
|a_j-A_j|\le S_\ell,\qquad |b_j-(A_j+2^{-\ell})|\le S_\ell
\qquad(1\le j\le d).
\]
It follows that there exist half-open rectangles \(\{R_{u\ast v}\}_{v=0}^{b-1}\) such that:
\begin{enumerate}
\item[(i)] \(\{R_{u\ast v}\}_{v=0}^{b-1}\) partitions \(R_u\) up to null sets
\item[(ii)] \(\lambda_d(R_{u\ast v})=M(u\ast v)/N\) for every \(v\in\{0,\dots,b-1\}\)
\item[(iii)] If we write
\[
C_{u\ast v}=\prod_{j=1}^d [A_{u\ast v,j},A_{u\ast v,j}+2^{-(\ell+1)}),
\qquad
R_{u\ast v}=\prod_{j=1}^d [a_{u\ast v,j},b_{u\ast v,j}),
\]
we obtain
\[
|a_{u\ast v,j}-A_{u\ast v,j}|\le S_{\ell+1},\qquad
|b_{u\ast v,j}-(A_{u\ast v,j}+2^{-(\ell+1)})|\le S_{\ell+1}
\qquad(1\le j\le d).
\]
\end{enumerate}
\end{lemma}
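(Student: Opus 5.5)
The plan is to split $R_u$ into its $b=2^d$ children by $d$ successive coordinate cuts. Cut first in coordinate $1$, then cut each resulting piece in coordinate $2$, and so on. After $j$ rounds of cutting there are $2^j$ sub-rectangles, indexed by the first $j$ binary digits $(\varepsilon_1(v),\dots,\varepsilon_j(v))$. Each such sub-rectangle $P$ is to receive volume equal to $\frac1N$ times the total count
\[
M_P:=\sum_{v\text{ compatible}} M(u\ast v),
\]
which is the sum over the $2^{d-j}$ child words extending the fixed digits. At the final round $j=d$ each piece carries exactly $M(u\ast v)/N$, which gives (ii). Property (i) is automatic, because every cut is a hyperplane splitting a half-open rectangle into two half-open rectangles.

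At round $j$, take a piece $P$ with $\lambda_d(P)=M_P/N$ and cut it in direction $j$. The cut position $t$ is given by Lemma~\ref{lem:cut_displacement_winfty}, with target volume $V=M_{P,0}/N$, where $M_{P,0}$ is the count of the half whose $j$-th digit is $0$. This requires $0<V<|P|$, which holds since each child count is at least $M(u\ast v)\ge1$ (by Lemma~\ref{lem:L_and_Mu_bounds}, since $\ell+1\le L-2$). The children counts $M(u\ast v)$ lie in $\{m,m+1\}$ with $m=\lfloor N/b^{\ell+1}\rfloor$ by \eqref{eq:M_u_bounds_winfty}. Hence $M_{P,0}$ is a sum over half of the $B=2^{d-j+1}$ children in $P$, and Lemma~\ref{lem:half_sum_m_m1} gives
\[
\Bigl|V-\frac{|P|}{2}\Bigr|\le \frac{2^{d-j+1}}{4N}=\frac{2^{d-j-1}}{N}.
\]
Lemma~\ref{lem:cut_displacement_winfty} then bounds the displacement of the cut from the midpoint $t_{\mathrm{mid}}$ of $P$'s $j$-th interval by this quantity divided by $\mathcal A_j(P)$.

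Next compare $t$ with the true dyadic midpoint $A_j+2^{-(\ell+1)}$. The $j$-th side of $P$ is still the $j$-th side of $R_u$, since only coordinates $<j$ have been cut so far. By hypothesis its endpoints are within $S_\ell$ of the dyadic ones, so $t_{\mathrm{mid}}$ is within $S_\ell$ of the dyadic midpoint. The new endpoints in coordinate $j$ are therefore within $S_\ell+\text{(cut displacement)}$ of the dyadic endpoints of $C_{u\ast v}$. Endpoints not created by this cut are inherited from $R_u$ and stay within $S_\ell$. This reduces (iii) to showing that each cut displacement is at most $\Delta_\ell$.

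The main obstacle is a lower bound on $\mathcal A_j(P)$. The sides of $P$ in coordinates $i<j$ have been halved (up to drift), and those with $i>j$ are full. So I will aim for each side of $P$ having length at least $2^{-(\ell+1)}-2S_{\ell+1}$ in the halved directions and $2^{-\ell}-2S_\ell$ in the full ones. The intended estimate is that $S_{\ell+1}\le 2^{-(\ell+3)}$ for $\ell\le L-3$, so that every side is at least $\tfrac12 2^{-(\ell+1)}$. This should follow from summing the geometric series:
\[
S_{\ell+1}\lesssim \frac{2^{2d-3}2^{\ell(d-1)}}{N},
\]
and using $N>b^{L-1}\ge b^{\ell+2}$. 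It is exactly here that $d\ge2$ (ratio $2^{d-1}>1$) and the restriction $\ell\le L-3$ enter. With $\mathcal A_j(P)\ge (2^{-(\ell+2)})^{d-1}$, the displacement is at most
\[
\frac{2^{d-j-1}}{N}\,2^{(\ell+2)(d-1)}.
\]
The worst case is $j=1$, and I would then check that this is bounded by $\Delta_\ell=\frac{2^{2d-3}}{N}2^{\ell(d-1)}$. If the constants fail to match exactly, I would sharpen the side-length bound using that only $j-1$ sides are halved, which gains a factor $2^{-(j-1)}$ that compensates the $2^{d-j-1}$. This bookkeeping of constants is the step I expect to need the most care.
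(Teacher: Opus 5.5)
Your architecture is the same as the paper's: cuts in coordinates $1,\dots,d$ in turn, target volumes from grouped child counts, Lemma~\ref{lem:half_sum_m_m1} for the numerator, Lemma~\ref{lem:cut_displacement_winfty} for the displacement, and the triangle inequality through $t_{\mathrm{mid}}$ to reduce (iii) to $|t-t_{\mathrm{mid}}|\le\Delta_\ell$. The gap is in the lower bound on $\mathcal A_j(P)$, and it is not only a matter of constants.

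The estimate $S_{\ell+1}\le 2^{-(\ell+3)}$ for $\ell\le L-3$ is false. Summing the series with $N>b^{\ell+2}$ only gives $S_{\ell+1}<\frac{2^{d-1}}{2^{d-1}-1}\,2^{-(\ell+3)}$ (compare Lemma~\ref{lem:SL_bound}), and the extra factor is really there. For $d=2$, $N=65$ (so $L=4$) and $\ell=1$, one has $S_2=\Delta_0+\Delta_1=6/65>1/16$. Your halved-side bound $2^{-(\ell+1)}-2S_{\ell+1}$ is then only $17/260$ rather than $1/8$. It yields $|t-t_{\mathrm{mid}}|\le 2/17$ for the second cut, which exceeds $\Delta_1=4/65$. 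Worse, for $d=2$, $\ell=L-3$, $N=b^{L-1}+1$, the quantity $2^{-(\ell+1)}-2S_{\ell+1}$ is only about $4\cdot 2^{-L}$ times $2^{-(\ell+1)}$. So no refinement of the bookkeeping can rescue this route.

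The missing idea is to measure a halved side from the parent interval and its actual midpoint, not through the dyadic endpoints; that triangle inequality counts the inherited drift twice. Run the estimate as an induction on the cut index, so each earlier cut in a coordinate $i<j$ already satisfies $|t_i-t_{\mathrm{mid}}|\le\Delta_\ell$. Then:
\begin{itemize}
\item[(a)] A halved side has length at least $\tfrac12(b_i-a_i)-\Delta_\ell\ge 2^{-(\ell+1)}-S_\ell-\Delta_\ell=2^{-(\ell+1)}-S_{\ell+1}$, with one copy of the drift, not two.
\item[(b)] An uncut side has length at least $2^{-\ell}-2S_\ell$.
\end{itemize}
Since $N>b^{\ell+2}$ gives separately $S_\ell\le 2^{-(\ell+3)}$ and $\Delta_\ell\le 2^{-(\ell+3)}$, these lengths are at least $2^{-(\ell+2)}$ and $2^{-(\ell+1)}$ respectively. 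Hence $\mathcal A_j(P)\ge 2^{-(\ell+2)(j-1)-(\ell+1)(d-j)}$, and
\[
|t-t_{\mathrm{mid}}|\le\frac{2^{d-j-1}}{N}\,2^{(\ell+2)(j-1)+(\ell+1)(d-j)}=\frac{2^{2d-3}}{N}\,2^{\ell(d-1)}=\Delta_\ell
\]
for every $j$. The compensating gain comes from the $d-j$ uncut sides, a factor $2^{-(d-j)}$, not $2^{-(j-1)}$ as you wrote.

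This is the paper's argument up to constants. The paper sets $\sigma_\ell=2^{-\ell}-2S_\ell\ge\tfrac34\,2^{-\ell}$ and uses $\Delta_\ell\le\sigma_\ell/6$ to get halved sides $\ge\sigma_\ell/3$; its bound is tight at the last cut $k=d$.
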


\begin{proof}
For $v\in\{0,\dots,b-1\}$, we write $\varepsilon(v)=(\varepsilon_1(v),\dots,\varepsilon_d(v))\in\{0,1\}^d$, and for a bit-prefix $\eta=(\eta_1,\dots,\eta_k)\in\{0,1\}^k$, define
\[
M_u(\eta):=\sum_{\substack{v\in\{0,\dots,b-1\}:\\ \varepsilon_1(v)=\eta_1,\ \dots,\ \varepsilon_k(v)=\eta_k}} M(u\ast v),
\]
where \(u\ast v\) denotes the concatenation of the word \(u\) with the digit \(v\). We also set \(M_u(\varnothing):=M(u)\), so that \(M_u(\eta,0)+M_u(\eta,1)=M_u(\eta)\).

We shall first consider the target volumes. Note that since $\ell+1\le L-2$ holds, we have $b^{\ell+1}<N$ and thus $\lfloor N/b^{\ell+1}\rfloor\ge1$. By Lemma \ref{lem:dyadic_counts_winfty}, every child count $M(u\ast v)\ge1$, so every partial sum $M_u(\eta,0)$ and $M_u(\eta,1)$ is also $\ge1$. We conclude that all target volumes below are strictly between $0$ and the parent volume, so Lemma \ref{lem:cut_displacement_winfty} applies at every step.

Let us now set $R_u^{\varnothing}:=R_u$. For $k=1,2,\dots,d$ and each $\eta\in\{0,1\}^{k-1}$, let $t=t(u,\eta,k)$ be the unique number in the $k$-th coordinate interval of $R_u^{(\eta)}$ such that
\[
\lambda_d\bigl(R_u^{(\eta)}\cap\{x_k<t\}\bigr)=\frac{M_u(\eta,0)}{N}.
\]
Define
\[
R_u^{(\eta,0)}:=R_u^{(\eta)}\cap\{x_k<t\},\qquad
R_u^{(\eta,1)}:=R_u^{(\eta)}\cap\{x_k\ge t\}.
\]
Notice that by Lemma \ref{lem:cut_displacement_winfty}, the cut location $t$ is uniquely determined, and
\[
\lambda_d\bigl(R_u^{(\eta,0)}\bigr)=\frac{M_u(\eta,0)}{N},\qquad
\lambda_d\bigl(R_u^{(\eta,1)}\bigr)=\frac{M_u(\eta,1)}{N}.
\]
After the $d$ cuts, we define
\[
R_{u\ast v}:=R_u^{(\varepsilon_1(v),\dots,\varepsilon_d(v))},\qquad v\in\{0,\dots,b-1\}.
\]
It follows that $\{R_{u\ast v}\}_v$ is a partition of $R_u$ into half-open rectangles up to null sets and satisfies (ii), as required.

Let us now focus on the control of cut positions. We define the minimal parent side length as follows:
\[
\sigma_\ell:=2^{-\ell}-2S_\ell.
\]
Observe that $\ell\le L-3$ implies $b^{\ell+2}\le b^{L-1}<N$, so we have
\[
S_\ell=\sum_{r=0}^{\ell-1}\Delta_r
\le \frac{2^{2d-3}}{N}\sum_{r=0}^{\ell-1}2^{r(d-1)}
\le \frac{2^{2d-3}}{N}\cdot \frac{2^{\ell(d-1)}}{2^{d-1}-1}
\le 2^{-\ell-3}.
\]
It holds that $\sigma_\ell\ge 2^{-\ell}-2^{-\ell-2}=\frac34\,2^{-\ell}$, and also
\[
\Delta_\ell=\frac{2^{2d-3}}{N}2^{\ell(d-1)}\le 2^{-\ell-3}\le \frac{\sigma_\ell}{6}.
\]

We now claim that \textit{every} cut at level $\ell$ differs from the midpoint of the interval being cut by at most $\Delta_\ell$. Our strategy is to establish this by induction on the cut index $k=1,\dots,d$. To this end, fix $k\in\{1,\dots,d\}$ and $\eta\in\{0,1\}^{k-1}$, and consider the cut that splits $R_u^{(\eta)}$ in the $k$-th coordinate.
Note that only the first $k-1$ coordinates have been cut so far, which implies that the $k$-th coordinate interval of $R_u^{(\eta)}$ is still the parent interval $[a_k,b_k)$.
Let $t$ be the cut location in coordinate $k$ and let
\[
t_{\mathrm{mid}}:=\frac{a_k+b_k}{2}
\]
be the midpoint of this interval.
By Lemma \ref{lem:cut_displacement_winfty}, we obtain
\[
|t-t_{\mathrm{mid}}|
=\frac{\Bigl|\frac{M_u(\eta,0)}{N}-\frac12\frac{M_u(\eta)}{N}\Bigr|}{\mathcal A_k}
=\frac{1}{N}\cdot\frac{\bigl|M_u(\eta,0)-\frac12 M_u(\eta)\bigr|}{\mathcal A_k},
\]
where $\mathcal A_k$ is the cross-sectional area of $R_u^{(\eta)}$ that is orthogonal to $e_k$.

Let us now establish the numerator bound.
Among the children $u\ast v$ that are consistent with $\eta$, there are
\[
B:=2^{d-k+1}
\]
choices of $v$. If we split by the $k$-th bit, we then divide them into two subfamilies of size $B/2$. Moreover, by Lemma \ref{lem:dyadic_counts_winfty} at level $\ell+1$, each $M(u\ast v)\in\{m,m+1\}$ with $m=\lfloor N/b^{\ell+1}\rfloor$ independent of $v$.
It now suffices to apply Lemma \ref{lem:half_sum_m_m1} to obtain
\[
\bigl|M_u(\eta,0)-\tfrac12 M_u(\eta)\bigr|\le \frac{B}{4}=2^{d-k-1}.
\]

To establish the denominator bound, we fix a previously cut coordinate $j<k$.
Observe that when the cut in direction $e_j$ was completed, the current $j$-interval was still the parent interval $[a_j,b_j)$, which has length at least $\sigma_\ell$.
Recall that we made the cut within $\Delta_\ell$ of its midpoint, so each output child
$j$-interval has length at least
\[
\frac{b_j-a_j}{2}-\Delta_\ell \ge \frac{\sigma_\ell}{2}-\Delta_\ell \ge \frac{\sigma_\ell}{3},
\]
since $\Delta_\ell\le \sigma_\ell/6$.
This implies that every previously cut coordinate interval has length at least $\sigma_\ell/3$. Every coordinate interval that has not yet been cut equals the corresponding parent interval and so has length at least $\sigma_\ell$.
We conclude that the cross-sectional area orthogonal to $e_k$ satisfies
\[
\mathcal A_k\ \ge\ \Bigl(\frac{\sigma_\ell}{3}\Bigr)^{k-1}\sigma_\ell^{d-k}
=\frac{\sigma_\ell^{d-1}}{3^{k-1}}.
\]

It now suffices to combine the above numerator and denominator bounds to obtain
\[
|t-t_{\mathrm{mid}}|
\le \frac{1}{N}\cdot 2^{d-k-1}\cdot \frac{3^{k-1}}{\sigma_\ell^{d-1}}
\le \frac{1}{N}\cdot 2^{d-k-1}3^{k-1}\cdot \Bigl(\frac{4}{3}\Bigr)^{d-1}2^{\ell(d-1)},
\]
where we use $\sigma_\ell\ge \frac34\,2^{-\ell}$.
Note that the factor $2^{d-k-1}3^{k-1}$ increases in $k$ and its maximum over $k\in\{1,\dots,d\}$ is attained at $k=d$, that is
\[
2^{d-k-1}3^{k-1}\le 2^{-1}3^{d-1}.
\]
It follows that
\[
|t-t_{\mathrm{mid}}|
\le \frac{1}{N}\cdot \frac12\,3^{d-1}\cdot \Bigl(\frac{4}{3}\Bigr)^{d-1}2^{\ell(d-1)}
=\frac{2^{2d-3}}{N}\,2^{\ell(d-1)}
=\Delta_\ell.
\]

At this point, we must still establish how to control the child endpoints.
We fix $k\in\{1,\dots,d\}$ and $\eta\in\{0,1\}^{k-1}\), and let $t$ be the cut location used to split $R_u^{(\eta)}$ in the $k$-th coordinate.
Notice that the first $k-1$ cuts are active only in coordinates $1,\dots,k-1$, the $k$-th coordinate interval of $R_u^{(\eta)}$ is still the parent interval $[a_k,b_k)$. We infer that its midpoint is
\[
t_{\mathrm{mid}}=\frac{a_k+b_k}{2}.
\]
In particular, the intermediate dyadic descendant determined by the previously chosen
bits $\eta$ still has $k$-th coordinate interval $[A_k,A_k+2^{-\ell})$. This is due to the fact that the previous dyadic cuts are also active only in coordinates $1,\dots,k-1$. It follows that the dyadic cutting hyperplane is
\[
x_k=A_k+2^{-(\ell+1)}.
\]

Recall that we assumed endpoint bounds at level $\ell$. From those, we now obtain
\[
|a_k-A_k|\le S_\ell,\qquad |b_k-(A_k+2^{-\ell})|\le S_\ell.
\]
It follows that
\begin{align*}
\left|t_{\mathrm{mid}}-\left(A_k+2^{-(\ell+1)}\right)\right|
&=\frac12\left|(a_k-A_k)+\bigl(b_k-(A_k+2^{-\ell})\bigr)\right|\\
&\le \frac12\Bigl(|a_k-A_k|+|b_k-(A_k+2^{-\ell})|\Bigr)\\
&\le S_\ell.
\end{align*}
We now combine this with the midpoint estimate that proved above. From
\[
|t-t_{\mathrm{mid}}|\le \Delta_\ell,
\]
we obtain
\[
\left|t-\left(A_k+2^{-(\ell+1)}\right)\right|
\le |t-t_{\mathrm{mid}}|
+\left|t_{\mathrm{mid}}-\left(A_k+2^{-(\ell+1)}\right)\right|
\le \Delta_\ell+S_\ell
= S_{\ell+1}.
\]

Let us now fix $v\in\{0,\dots,b-1\}$. For each $j\in\{1,\dots,d\}$, let $t_j$ denote the cut location used at the $j$-th stage along the branch determined by the previously chosen bits
\[
\bigl(\varepsilon_1(v),\dots,\varepsilon_{j-1}(v)\bigr).
\]
It follows that the $j$-th coordinate interval of $R_{u\ast v}$ is
\[
[a_j,t_j)\quad\text{if }\varepsilon_j(v)=0,
\qquad\text{and}\qquad
[t_j,b_j)\quad\text{if }\varepsilon_j(v)=1.
\]
On the other hand, the $j$-th coordinate interval of $C_{u\ast v}$ is
\[
[A_j,A_j+2^{-(\ell+1)})\quad\text{or}\quad
[A_j+2^{-(\ell+1)},A_j+2^{-\ell}).
\]

Observe that the old endpoints satisfy
\[
|a_j-A_j|\le S_\ell\le S_{\ell+1},\qquad
|b_j-(A_j+2^{-\ell})|\le S_\ell\le S_{\ell+1},
\]
and the new endpoint fulfils
\[
|t_j-(A_j+2^{-(\ell+1)})|\le S_{\ell+1}
\]
by the estimate we establish (note that we applied it with $k=j$ and the appropriate preceding bit-prefix). It follows that every endpoint of $R_{u\ast v}$ differs from the corresponding endpoint of $C_{u\ast v}$
by at most $S_{\ell+1}$. In other terms, if we write
\[
C_{u\ast v}=\prod_{j=1}^d [A_{u\ast v,j},A_{u\ast v,j}+2^{-(\ell+1)}),
\qquad
R_{u\ast v}=\prod_{j=1}^d [a_{u\ast v,j},b_{u\ast v,j}),
\]
we obtain
\[
|a_{u\ast v,j}-A_{u\ast v,j}|\le S_{\ell+1},\qquad
|b_{u\ast v,j}-(A_{u\ast v,j}+2^{-(\ell+1)})|\le S_{\ell+1}
\qquad(1\le j\le d).
\]
We conclude that (iii) holds for every child rectangle.
\end{proof}

\begin{proposition}[Dyadic rectangle partition]\label{prop:dyadic_rectangle_partition}
Let us assume $L\ge3$. For each $\ell\in\{0,1,\dots,L-2\}$, there exists a family $\{R_u\}_{|u|=\ell}$ of half-open rectangles in $Q$ such that:
\begin{enumerate}
\item[(i)] $\{R_u:|u|=\ell\}$ partitions $Q$ up to $\lambda_d$--null sets
\item[(ii)] $\lambda_d(R_u)=M(u)/N$ for every $|u|=\ell$
\item[(iii)] If we write $C_u=\prod_{j=1}^d [A_{u,j},A_{u,j}+2^{-\ell})$ and $R_u=\prod_{j=1}^d [a_{u,j},b_{u,j})$,
we obtain the endpoint bounds
\begin{equation}\label{eq:endpoint_bounds}
|a_{u,j}-A_{u,j}|\le S_\ell,\qquad |b_{u,j}-(A_{u,j}+2^{-\ell})|\le S_\ell
\qquad(1\le j\le d).
\end{equation}
\end{enumerate}
\end{proposition}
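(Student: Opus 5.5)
We argue by induction on the level $\ell$, starting from $\ell=0$ and using Lemma \ref{lem:one_step_split_winfty} as the inductive step up to level $L-2$.

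\emph{Base case $\ell=0$.} The only word is $u=\varnothing$, and we set $R_\varnothing:=C_\varnothing=Q=[0,1)^d$. Then (i) is trivial. For (ii), note that $M(\varnothing)=\#\{1\le n\le N: x_n\in Q\}=N$, because every $x_n$ has coordinates in $[0,1)$. Hence $\lambda_d(R_\varnothing)=1=M(\varnothing)/N$. For (iii), the endpoints of $R_\varnothing$ and $C_\varnothing$ coincide, so the deviations are $0=S_0$.

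\emph{Inductive step.} Suppose the family $\{R_u\}_{|u|=\ell}$ has been built with properties (i)--(iii) for some $\ell\le L-3$. This range of $\ell$ is exactly the one allowed in Lemma \ref{lem:one_step_split_winfty}, and $L\ge3$ ensures that the range is nonempty whenever a step is needed.

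Fix $u$ with $|u|=\ell$. The hypotheses of Lemma \ref{lem:one_step_split_winfty} are $\lambda_d(R_u)=M(u)/N$ and the endpoint bounds with $S_\ell$. These are precisely (ii) and (iii) at level $\ell$. The lemma therefore produces half-open rectangles $\{R_{u\ast v}\}_{v=0}^{b-1}$ that partition $R_u$ up to null sets, have volumes $M(u\ast v)/N$, and satisfy the endpoint bounds with $S_{\ell+1}$.

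We define the level-$(\ell+1)$ family as $\{R_{u\ast v}: |u|=\ell,\ 0\le v\le b-1\}$. Every word of length $\ell+1$ is uniquely of the form $u\ast v$, so each word receives exactly one rectangle. Properties (ii) and (iii) at level $\ell+1$ are then exactly items (ii) and (iii) of the lemma.

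For (i), we use that $\{R_u\}_{|u|=\ell}$ partitions $Q$ up to null sets and each $R_u$ is partitioned up to null sets by its children. A finite union of null sets is null, so the children together partition $Q$ up to $\lambda_d$-null sets. Each child lies inside $R_u\subset Q$, so the children are rectangles in $Q$.

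\emph{Where care is needed.} The substantive work is all in Lemma \ref{lem:one_step_split_winfty}; what remains here is to track indices correctly. The step from $\ell$ to $\ell+1$ is only justified for $\ell\le L-3$, and this is what makes the final level $L-2$, not $L-1$.

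A routine consistency check is that $\sum_{|u|=\ell}M(u)=N$, since the cubes $C_u$ with $|u|=\ell$ partition $Q$. This matches the volumes summing to $\lambda_d(Q)=1$, as it must.

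The last point is that the rectangles $R_u$ carry positive volume, which the proposition does not require. Within the inductive step this is guaranteed by the lemma itself, since its child counts satisfy $M(u\ast v)\ge1$. At the terminal level it also follows from \eqref{eq:Mu_Lminus2_positive} when $N>b^2$.
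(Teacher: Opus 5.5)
Your proposal is correct and follows the paper's proof essentially verbatim. Both use induction on $\ell$ with base case $R_\varnothing=Q$. The step $\ell\to\ell+1$ for $\ell\le L-3$ comes from applying Lemma \ref{lem:one_step_split_winfty} to each parent $R_u$ and taking the union of the resulting children over all words of length $\ell$.
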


\begin{proof}
We establish the claim by induction on $\ell$. 

\noindent\textit{Base case: $\ell=0$.}
Let us take $R_\varnothing:=Q=C_\varnothing$. Note that (i) and (ii) hold since $M(\varnothing)=N$, and \eqref{eq:endpoint_bounds} holds with $S_0=0$.

\noindent\textit{Induction step: $\ell\to\ell+1$ for $\ell\le L-3$.}
Let us assume that we have already constructed rectangles $\{R_u:|u|=\ell\}$ that satisfy (i)--(iii). For each word $u$ of length $\ell$, we apply Lemma \ref{lem:one_step_split_winfty} to the pair \((C_u,R_u)\). We then obtain half-open child rectangles \(\{R_{u\ast v}\}_{v=0}^{b-1}\) that partition \(R_u\) up to null sets, have the correct masses \(M(u\ast v)/N\), and satisfy the level-\((\ell+1)\) endpoint bounds. If we now take the union over all words \(u\) of length \(\ell\), we obtain a family
\(\{R_w\}_{|w|=\ell+1}\) with properties (i)--(iii) at level \(\ell+1\). This
completes the induction.
\end{proof}

\begin{lemma}[Drift up to level $L-2$]\label{lem:SL_bound}
If we assume $L\ge3$, then
\[
S_{L-2}\ \le\ \frac{2^{d-1}}{2^{d-1}-1}\,2^{-L}\ \le\ 2\cdot 2^{-L}.
\]
\end{lemma}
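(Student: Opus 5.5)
The plan is to evaluate $S_{L-2}$ directly as a finite geometric series from the definition \eqref{eq:Delta_S_def}, and then use the lower bound $N>b^{L-1}$ coming from the minimality of $L$ in \eqref{eq:def_L_winfty}. This is the same computation as the bound on $S_\ell$ inside the proof of Lemma \ref{lem:one_step_split_winfty}, now carried up to the terminal level $L-2$.

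\emph{Step 1 (summing the series).} Since $L\ge3$, every index $r\in\{0,\dots,L-3\}$ lies in the range where $\Delta_r$ is defined. By definition,
\[
S_{L-2}=\sum_{r=0}^{L-3}\Delta_r=\frac{2^{2d-3}}{N}\sum_{r=0}^{L-3}2^{r(d-1)}
=\frac{2^{2d-3}}{N}\cdot\frac{2^{(L-2)(d-1)}-1}{2^{d-1}-1}.
\]
We then drop the $-1$ in the numerator, which gives
\[
S_{L-2}\le \frac{2^{2d-3}\,2^{(L-2)(d-1)}}{N\,(2^{d-1}-1)}.
\]
The denominator $2^{d-1}-1$ is positive precisely because $d\ge2$.

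\emph{Step 2 (removing $N$).} As in the proof of Lemma \ref{lem:L_and_Mu_bounds}, minimality of $L$ gives $N>b^{L-1}=2^{d(L-1)}$, so $1/N<2^{-d(L-1)}$. Inserting this, the power of $2$ has exponent
\[
(2d-3)+(L-2)(d-1)-d(L-1)=d-1-L.
\]
Hence
\[
S_{L-2}\le\frac{2^{d-1}}{2^{d-1}-1}\,2^{-L},
\]
which is the first inequality.

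\emph{Step 3 (the constant).} Since $d\ge2$, we have $2^{d-1}\ge2$. Therefore
\[
\frac{2^{d-1}}{2^{d-1}-1}=1+\frac{1}{2^{d-1}-1}\le 2,
\]
with equality exactly when $d=2$. This gives the second inequality.

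There is no real obstacle here. The only point requiring care is the exponent bookkeeping in Step 2, where one must use the strict lower bound $N>b^{L-1}$ rather than the upper bound $N\le b^L$.
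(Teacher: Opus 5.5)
Your proposal is correct and follows the paper's proof essentially verbatim: you bound the geometric series $\sum_{r=0}^{L-3}2^{r(d-1)}$ by $2^{(L-2)(d-1)}/(2^{d-1}-1)$ and use $N>b^{L-1}=2^{d(L-1)}$ to get the exponent $d-1-L$. Your explicit verification of the second inequality via $2^{d-1}\ge 2$ for $d\ge2$ fills in a step the paper leaves implicit.
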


\begin{proof}
By definition, we have
\[
S_{L-2}=\sum_{\ell=0}^{L-3}\Delta_\ell
=\frac{2^{2d-3}}{N}\sum_{\ell=0}^{L-3}2^{\ell(d-1)}
\le \frac{2^{2d-3}}{N}\cdot \frac{2^{(L-2)(d-1)}}{2^{d-1}-1}.
\]
From $b^{L-1}<N$ and $b=2^d$, we have $N>2^{d(L-1)}$, and thus
\[
S_{L-2}
\le \frac{2^{2d-3}}{2^{d(L-1)}}\cdot \frac{2^{(L-2)(d-1)}}{2^{d-1}-1}
=\frac{2^{d-1}}{2^{d-1}-1}\,2^{-L}.
\]
This proves the lemma.
\end{proof}

\subsection{Proof of the allocation theorem}\label{subsec:winfty_sharp_cube}

\begin{proof}[Proof of Theorem \ref{thm:main}]
The case $N\le b^2$ follows immediately. Let us choose an arbitrary partition of $Q$ into pairwise disjoint Borel sets $A_1,\dots,A_N$ of measure $1/N$. From
\[
6\sqrt d\,N^{-1/d}\ge 6\sqrt d\,b^{-2/d}=\frac32\sqrt d\ge \diam(Q),
\]
we infer that each $A_n$ is contained in $B(x_n,6\sqrt d\,N^{-1/d})$.
This establishes the theorem for $N\le b^2$. 

For the rest of the proof, we assume that $N>b^2$, so $L\ge3$.
Let $\{R_u:|u|=L-2\}$ be the level-$(L-2)$ rectangle partition from
Proposition \ref{prop:dyadic_rectangle_partition}. By \eqref{eq:Mu_Lminus2_positive}, we know that each $R_u$ has positive volume. For every $u$ with $|u|=L-2$, we write
\[
R_u=\prod_{j=1}^d [a_{u,j},b_{u,j}).
\]
$\lambda_d(R_u)=M(u)/N$ implies that there exist unique numbers
\[
a_{u,1}=s_{u,0}<s_{u,1}<\cdots<s_{u,M(u)}=b_{u,1}
\]
such that
\[
(s_{u,r}-s_{u,r-1})\prod_{j=2}^d (b_{u,j}-a_{u,j})=\frac1N
\qquad(r=1,\dots,M(u)).
\]
This implies that the rectangles
\[
[s_{u,r-1},s_{u,r})\times\prod_{j=2}^d [a_{u,j},b_{u,j}),
\qquad r=1,\dots,M(u),
\]
are pairwise disjoint half-open rectangles of $\lambda_d$--measure $1/N$ whose union is $R_u$. If we now repeat this procedure for all $u$, we obtain exactly $N=\sum_{|u|=L-2}M(u)$ pairwise disjoint rectangles of $\lambda_d$--measure $1/N$ that cover $Q$ up to null sets. We shall enumerate them as $\{A_1,\dots,A_N\}$.

Note that for each $u$ with $|u|=L-2$, the cube $C_u$ contains exactly $M(u)$ of the points $x_1,\dots,x_N$. We choose an arbitrary bijection between these points and the $M(u)$ sub-rectangles of $R_u$. Once we have relabelled the sub-rectangles as $A_1,\dots,A_N$, we may assume that $A_n\subset R_u$ whenever $x_n\in C_u$.

We fix such a pair $(A_n,x_n)$ with $A_n\subset R_u$ and $x_n\in C_u$.
By the endpoint bounds \eqref{eq:endpoint_bounds} at level $L-2$,
we know that each coordinate interval of $R_u$ differs from that of $C_u$ by at most $S_{L-2}$. For every $y=(y_1,\dots,y_d)\in R_u$, we then define $z=(z_1,\dots,z_d)\in\overline{C_u}$ by letting $z_j$ be the metric projection of $y_j$ onto the interval
\[
[A_{u,j},A_{u,j}+2^{-(L-2)}]\qquad(1\le j\le d),
\]
where $C_u=\prod_{j=1}^d [A_{u,j},A_{u,j}+2^{-(L-2)})$.
From
\[
|a_{u,j}-A_{u,j}|\le S_{L-2},\qquad
|b_{u,j}-(A_{u,j}+2^{-(L-2)})|\le S_{L-2},
\]
we infer that every $y_j\in[a_{u,j},b_{u,j})$ satisfies $|y_j-z_j|\le S_{L-2}$.
It follows that
\[
|y-z|\le \sqrt d\,S_{L-2}.
\]
Since $x_n\in C_u\subset\overline{C_u}$ holds, we also have
\[
|y-x_n|
\le |y-z|+|z-x_n|
\le \sqrt d\,S_{L-2}+\diam(C_u)
=\sqrt d\,S_{L-2}+\sqrt d\,2^{-(L-2)}.
\]
Note that every \(y\in A_n\) satisfies
\[
|y-x_n|\le \sqrt d\bigl(S_{L-2}+2^{-(L-2)}\bigr).
\]
By the proof of Lemma \ref{lem:SL_bound}, we obtain the strict estimate
\[
S_{L-2}<\frac{2^{d-1}}{2^{d-1}-1}\,2^{-L}\le 2\cdot 2^{-L}.
\]
Since $2^{-(L-2)}=4\cdot 2^{-L}$ holds, it follows that
\[
\sqrt d\bigl(S_{L-2}+2^{-(L-2)}\bigr)\ <\ 6\sqrt d\,2^{-L}\ \le\ 6\sqrt d\,N^{-1/d}.
\]
Finally, we conclude that
\[
A_n\subset B(x_n,6\sqrt d\,N^{-1/d})\qquad\forall n.
\]
This proves Theorem \ref{thm:main}.
\end{proof}

\appendix
\section{The one-dimensional obstruction}\label{app:one_dimensional_obstruction}

\begin{proof}[Proof of Proposition \ref{prop:1d_intro}]
Let \(\lambda_{\T^1}\) denote the Haar probability measure on \(\T^1\), and let $f:[0,1]\to\T^1$ be the quotient map $f(t):=t\bmod 1$. For an arbitrary Borel
probability measure \(\mu\) on \([0,1]\), let \(f_\#\mu\) denote the
pushforward of \(\mu\) under \(f\). It follows that $f$ is $1$--Lipschitz from $([0,1],|\cdot|)$ to $(\T^1,\rho)$, where
\[
\rho(x,y):=\min_{m\in\Z}|x-y-m|
\]
is the flat metric on $\T^1$, and it also satisfies $f_\#\lambda_1=\lambda_{\T^1}$.

We set $\nu_N:=f_\#\mu_N$. Indeed, if \(\pi\in\Pi(\mu_N,\lambda_1)\), then \((f,f)_\#\pi\in\Pi(f_\#\mu_N,f_\#\lambda_1)\). Moreover, \(f\) is \(1\)-Lipschitz, so we have
\(\rho(f(x),f(y))\le |x-y|\), and thus
\[
W_p^{(\T^1,\rho)}(\nu_N,\lambda_{\T^1})
= W_p^{(\T^1,\rho)}(f_\#\mu_N,f_\#\lambda_1)
\le W_p^{([0,1],|\cdot|)}(\mu_N,\lambda_1).
\]
By Graham's theorem \cite[Theorem 3]{Graham2019}, we conclude that for every sequence on $\T^1$, there exist infinitely many $N$ with the property that
\[
W_1^{(\T^1,\rho)}(\nu_N,\lambda_{\T^1})\gtrsim \frac{\sqrt{\log N}}{N}.
\]
Since $W_1\le W_p$ holds for every $1\le p\le\infty$, a uniform bound
\[
W_p(\mu_N,\lambda_1)\le C_p\,N^{-1}\qquad\forall N\in\N
\]
would imply
\[
W_1^{(\T^1,\rho)}(\nu_N,\lambda_{\T^1})\lesssim N^{-1}
\]
for all $N$, but this leads to a contradiction.
\end{proof}

\end{document}